\title{On the behaviour of strong semistability in geometric deformations}
\author{Holger Brenner}
\author{Axel St\"abler}
\DeclareMathOperator{\Proj}{Proj}
\DeclareMathOperator{\Syz}{Syz}
\DeclareMathOperator{\rk}{rk}
\DeclareMathOperator{\Spec}{Spec}
\newcommand{\primeboundlarge}{3433}
\newcommand{\primeexceptions}{$2$, $103$, $151$, $199$, $239$, $241$, $257$, $263$, $281$, $311$, $313$, $337$, $367$, $401$, $409$, $433$, $457$, $577$, $601$, $641$, $647$, $673$, $719$, $727$, $743$, $809$, $823$, $881$, $887$, $911$, $919$, $937$, $953$, $967$, $977$, $1033$, $1129$, $1153$, $1217$, $1249$, $1279$, $1303$, $1327$, $1399$, $1409$, $1423$, $1471$, $1481$, $1487$, $1511$, $1543$, $1583$, $1601$, $1607$, $1609$, $1663$, $1697$, $1759$, $1801$, $1823$, $1831$, $1871$, $1873$, $1913$, $1951$, $1993$, $1999$, $2063$, $2081$, $2087$, $2089$, $2113$, $2137$, $2153$, $2161$, $2207$, $2287$, $2297$, $2311$, $2399$, $2441$, $2447$, $2473$, $2503$, $2543$, $2591$, $2609$, $2617$, $2633$, $2713$, $2719$, $2801$, $2833$, $2879$, $2927$, $2969$, $3001$, $3023$, $3041$, $3049$, $3089$, $3167$, $3191$, $3217$, $3257$, $3313$, $3343$, $3359$}
\newcommand{\primeexceptionsnumber}{$105$} 
\begin{document}
\swapnumbers
\theoremstyle{definition}
\newtheorem{Le}{Lemma}[section]
\newtheorem{Def}[Le]{Definition}
\newtheorem*{DefB}{Definition}
\newtheorem{Bem}[Le]{Remark}
\newtheorem{Ko}[Le]{Corollary}
\newtheorem{Theo}[Le]{Theorem}
\newtheorem*{TheoB}{Theorem}
\newtheorem{Bsp}[Le]{Example}
\newtheorem{Be}[Le]{Observation}
\newtheorem{Prop}[Le]{Proposition}
\newtheorem{Sit}[Le]{Situation}
\newtheorem{Que}[Le]{Question}
\newtheorem*{Con}{Conjecture}
\newtheorem{Dis}[Le]{Discussion}
\newtheorem{Prob}[Le]{Problem}
\newtheorem{Konv}[Le]{Convention}
\def\cocoa{{\hbox{\rm C\kern-.13em o\kern-.07em C\kern-.13em o\kern-.15em
A}}}
\address{Holger Brenner\\
Universit\"at Osnabr\"uck, Fachbereich 6: Mathematik/Informatik,
Albrechtstr. 28a,
49069 Osnabr\"uck, Germany}
\email{hbrenner@uni-osnabrueck.de}

\address{Axel St\"abler
Universit\"at Osnabr\"uck, Fachbereich 6: Mathematik/Informatik,
Albrechtstr. 28a,
49069 Osnabr\"uck, Germany}
\curraddr{Johannes Gutenberg-Universit\"at Mainz\\ Fachbereich 08\\
Staudingerweg 9\\
55099 Mainz\\
Germany}
\email{axel.staebler@uni-osnabrueck.de}
\subjclass[2010]{Primary 14H60}

\begin{abstract}
Let $Y \to B$ be a relative smooth projective curve over an affine integral base scheme $B$ of positive characteristic. We provide for all prime characteristics example classes of vector bundles $\mathcal{S}$ over $Y$ such that $\mathcal{S}$ is generically strongly semistable and semistable but not strongly semistable for some special fibre.
\end{abstract}
\maketitle
\noindent Keywords: (Strongly) semistable vector bundles, equicharacteristic deformation

\section*{Introduction}

Let $B$ be an affine base scheme over a field $k$ of positive characteristic $p$ and consider a relative smooth projective curve $Y \rightarrow B$. Let $\mathcal S$ be a vector bundle over $Y$ so that every point $t \in B$ induces a vector bundle ${\mathcal S}_t$ on the fibre $Y_t$. We are interested in the question how the property of ${\mathcal S}_t$ being strongly semistable varies with $t$. Recall that a vector bundle on a smooth projective curve over a field of positive characteristic is called strongly semistable if all its Frobenius pull-backs are semistable.

The set of base points $t \in B$ such that the bundle ${\mathcal S}_t$ is semistable is open (possibly empty) -- see \cite[Theorem 2.8]{maruyamasemistableopen}. As we are in equal characteristic $p$, the $e$th Frobenius pull-back $F^{e^\ast} {\mathcal S}$ is again a bundle on the given curve and the semistability property of its induced bundles on the fibres also defines an open subset. Therefore the set of points parametrising strongly semistable bundles is a countable intersection of open subsets. In characteristic $2$ and $3$, P. Monsky has given examples (in the language of Hilbert-Kunz theory) over the affine line (minus some points) such that the bundle is strongly semistable over the generic point but for no closed point (see \cite{monskypoints4quartics}, \cite{monskyzdp4}).

In this paper we want to provide in all characteristics new example classes of bundles over $Y \rightarrow B$, where $B$ is an affine smooth curve of finite type over $k$, such that the generic bundle ${\mathcal S}_\eta$ over $Y_\eta$ is strongly semistable for the generic point $\eta \in B$ and semistable but not strongly semistable for some closed point $t \in B$. By localising at $t$ we get then a bundle over a relative curve over a discrete valuation domain such that the generic bundle is strongly semistable and such that the special bundle is semistable but not strongly semistable.

We describe here two constructions which lead to such examples. In both constructions we use syzygy bundles ${\mathcal S} = \Syz$, i.\,e.\ bundles given by short exact sequences of the form
\[0 \longrightarrow \Syz(f_1, \ldots, f_n) \longrightarrow \bigoplus_{i=1}^n {\mathcal O}_Y(-d_i) \stackrel{f_1, \ldots , f_n}{\longrightarrow} {\mathcal O}_Y \longrightarrow 0\, .\]
Here the $f_i$ are homogeneous elements of degree $d_i$ in a graded ring $R$ with $Y= \Proj R$ which are primary to the maximal graded ideal. Syzygy bundles exhibit a rich behaviour yet they are accessible for computations. In the end, we will work with syzygy bundles of rank two (given by three generators) on plane curves. One should also remark that on a smooth curve over an algebraically closed field every vector bundle is, up to twist, a syzygy bundle (see e.\,g.\ \cite[Proposition 3.8]{brennerstaeblerdaggersolid}).

In the first construction (Section \ref{fixedcurve}) the curve family will be trivial, i.\,e.\ a product $Y_0 \times B \rightarrow B$ where $Y_0$ is a smooth projective curve over $k$, and the syzygy bundle will be given by homogeneous elements varying with the basis. With this construction we provide explicit examples in all characteristics for Fermat curves of some degree $\delta \geq 5$.

In the second construction (Sections \ref{varycurvegeneric} and \ref{varycurvespecial} after some prepatory work in Section \ref{criteria}) the syzygy bundle is defined on the projective plane ${\mathbb P}^2$ and the family arises by restricting to a certain family of plane curves of degree $4$. Due to computational constraints, we can provide with this method only examples for odd characteristics $p \leq \primeboundlarge$ with \primeexceptionsnumber\ exceptions, the smallest one being $103$.

We thank V.\ Mehta for useful discussions and D.\ Brinkmann for proof-reading an earlier version of this article.

\section{Deforming syzygy bundles on a fixed curve}
\label{fixedcurve}

Suppose that we have a fixed smooth projective curve $Y$ over an algebraically closed field $k$ of positive characteristic. The moduli space $\mathcal M$ of semistable vector bundles of given rank and degree \cite{lepotier} may also contain points representing stable but not strongly semistable bundles. In fact, by \cite[Theorem 1]{langepaulyfrobenius} such bundles exist for all curves of genus $\geq 2$. If we connect such a point with a point representing a strongly semistable bundle by an integral affine curve $B$, then the universal bundle on $\mathcal M$ (if it exists) induces a bundle $\mathcal S$ on $Y \times B$ such that for some closed point $b_0 \in B$ the bundle ${\mathcal S}_0$ is strongly semistable and such that for some closed point $b_1 \in B$ the bundle ${\mathcal S}_1$ is semistable but not strongly semistable. In this case the bundle ${\mathcal S}_\eta$ on the generic curve $Y_\eta$ over $\kappa(\eta)$ is also strongly semistable.

We are particularly interested in bundles of degree $0$. So there is no universal bundle. One can then still use moduli constructions to obtain such examples. Specifically, one can use the existence of quasi universal families (cf.\ \cite[Proposition 4.6.2]{huybrechtslehn}) or one can use the existence of quasi sections (cf.\ \cite[I.2.25 and I.3.26]{milne}) and apply the construction outlined above to the Quot scheme from which the moduli space is constructed (cf.\ also \cite[Chapter 4]{huybrechtslehn}).

However, we would like to get concrete examples with such a behaviour. Therefore, we work instead directly with syzygy bundles and deform their defining sections. Fix an ample line bundle ${\mathcal O}_Y(1)$ on $Y$ and a degree type $(d_1 , \ldots ,d_n)$. For a tuple of global sections $f_i \in  \Gamma(Y, {\mathcal O}_Y(d_i))$ we get a syzygy bundle ${\mathcal S}_0 = \Syz (f_1, \ldots , f_n)$ provided that the sections determine a surjective map $\oplus_i {\mathcal O}_Y(-d_i) \rightarrow {\mathcal O}_Y $. If $g_1, \ldots ,g_n$ is another such tuple (with the same degree type and syzygy bunde ${\mathcal S}_1 = \Syz (g_1, \ldots , g_n)$), then we can consider for $t_1, \ldots, t_n \in k$ the family 
\[ \Syz (t_1 g_1+ (1-t_1)f_1, \ldots, t_n g_n+ (1-t_n) f_n)\]
of syzygy bundles on $Y \times {\mathbb A}^n \rightarrow {\mathbb A}^n$ (or over an open subset of ${\mathbb A}^n$). For $t=(t_1, \ldots, t_n)=(0, \ldots, 0)$ the fibre is ${\mathcal S}_0$ and for $t=(1, \ldots ,1)$ the fibre is ${\mathcal S}_1$. Note that one should look at this family over an open subset of ${\mathbb A}^n$ to make sure that the corresponding morphisms are surjective for every parameter (this is not automatically fulfilled, as the example $\Syz(x,y)$ and $\Syz (y,x)$ on ${\mathbb P}^1_{k}$, where $k$ is a field of characteristic $\neq 2$, shows). Also, if ${\mathcal S}_0$ and ${\mathcal S}_1$ are semistable, one might shrink the open subset further to ensure that all bundles in the family are semistable. 

The following simple instance of this construction yields already a large class of examples of families with the properties described in the introduction. We look at examples of semistable rank two syzygy bundles on Fermat curves and we deform one parameter of the defining sections to get the trivial bundle. 

\begin{Theo}
\label{main1}
Let $\delta \geq 5$ be a natural number and let $p$ be a prime number such that there exists $e$ so that $2 r^{e}  < \delta  < 3r^{e}  $, where $r=p \mod \delta $, and where $r^e$ is considered modulo $\delta$ as the least non-negative representative. Let $k$ be a field of characteristic $p$ and
let \[\mathcal{S} = \Syz(x^{2}, y^{2}, tz^{2} + (1-t)xy)(3) \] 
be the syzygy bundle on the smooth relative curve
\[ Y = \Proj k[t][x,y,z]/(x^\delta  + y^\delta  - z^\delta ) \longrightarrow \Spec k[t] \, . \]
Then for $t=1$ the special bundle ${\mathcal S}_1$ is semistable but not strongly semistable
and for $t=0$ the special bundle ${\mathcal S}_0$ is trivial, hence strongly semistable. For the generic point $\eta \in \Spec k[t]$ the bundle ${\mathcal S}_\eta$ on $Y_\eta$ is strongly semistable.
\end{Theo}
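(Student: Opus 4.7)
There are three claims to verify: the triviality of $\mathcal S_0$, the semistability but not strong semistability of $\mathcal S_1$, and the strong semistability of the generic fibre $\mathcal S_\eta$. I would organise the argument so that the ``easy'' endpoint $t=0$ drives, via openness of semistability, the generic statement, leaving only the specific analysis of $\mathcal S_1$ as the substantial task.

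I would begin with $\mathcal S_0 = \Syz(x^2, y^2, xy)(3)$. The triples $(y,0,-x)$ and $(0,x,-y)$ are global syzygies of $(x^2,y^2,xy)$ all of whose components are linear forms, hence give two global sections of $\Syz(x^2,y^2,xy)(3) \subseteq \mathcal O_{Y_0}(1)^{3}$. These induce a morphism $\mathcal O_{Y_0}^{\,2} \to \mathcal S_0$. Since $\mathcal S_0$ has rank two and $\det \mathcal S_0$ has degree $-6\delta + 2\cdot 3\delta = 0$, and the two sections are generically linearly independent (check at a point where $x,y \neq 0$), this map is a generically isomorphic morphism between rank-two bundles of the same degree zero, hence an isomorphism. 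Thus $\mathcal S_0$ is trivial, and in particular strongly semistable.

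For strong semistability at $\eta$ I would apply \cite[Theorem~2.8]{maruyamasemistableopen} to each Frobenius pull-back $F^{e*}\mathcal S$ on $Y \to \Spec k[t]$. This yields for every $e \geq 0$ an open subset $U_e \subseteq \Spec k[t]$ on which $F^{e*}\mathcal S_t$ is semistable. Since $\mathcal S_0$ is trivial by the previous step, so is $F^{e*}\mathcal S_0$, hence $0 \in U_e$ for all $e$. Each $U_e$ is therefore a non-empty open subset of the irreducible base, so it contains the generic point $\eta$; consequently $F^{e*}\mathcal S_\eta$ is semistable for all $e$, i.e.\ $\mathcal S_\eta$ is strongly semistable.

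The fibre $\mathcal S_1 = \Syz(x^2, y^2, z^2)(3)$ on the Fermat curve $Y_1$ is the subtle point. For semistability (rank two, degree zero) I would show that no line bundle of positive degree admits a non-zero morphism into $\mathcal S_1$; this amounts to controlling the global sections of twists $\mathcal S_1 \otimes \mathcal L^{-1}$, which reduces to the vanishing of low-degree syzygies of $(x^2,y^2,z^2)$ in $R = k[x,y,z]/(x^\delta + y^\delta - z^\delta)$, a consequence of the linear independence of $x^2,y^2,z^2$ in the relevant graded pieces for $\delta \geq 5$. For the failure of strong semistability I would use the arithmetic condition $2r^e < \delta < 3r^e$: after replacing Frobenius powers by their residues modulo $\delta$ via the Fermat relation, the monomial $xyz$ (up to a factor turning its total degree into $3r^e$) produces an explicit non-trivial syzygy of $(x^{2r^e}, y^{2r^e}, z^{2r^e})$ of degree strictly less than half of the expected middle degree, which translates into a sub-line-bundle of positive degree in $F^{e*}\mathcal S_1$ and destabilises it.

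The main obstacle will be this last step. The destabilising Frobenius pull-back is essentially a classical Hilbert--Kunz style computation in Fermat rings that the inequality $2r^e < \delta < 3r^e$ is designed to enforce. Verifying semistability of $\mathcal S_1$ itself for every $\delta \geq 5$, rather than for a ``general'' curve as in restriction theorems of Mehta--Ramanathan/Flenner type, is the delicate point: the Picard group of the Fermat curve is rich, so one cannot reduce to line subbundles of the form $\mathcal O_{Y_1}(d)$ and must instead bound the degree of an arbitrary destabilising sub-line-bundle, most cleanly by combining the stability of $\Omega^1_{\mathbb P^2}(3)$ on $\mathbb P^2$ with an appropriate restriction argument to $Y_1$.
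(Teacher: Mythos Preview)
Your treatment of $t=0$ (the explicit trivialising syzygies $(y,0,-x)$ and $(0,x,-y)$) and of the generic point (openness of semistability applied to every Frobenius pull-back, anchored at the strongly semistable fibre $t=0$) is exactly the argument the paper gives.

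Where you diverge is at $t=1$. The paper does not argue directly here at all: it simply invokes \cite[Corollary~2]{brennermiyaoka}, which asserts that $\Syz(x^2,y^2,z^2)(3)$ on the degree-$\delta$ Fermat curve is semistable but not strongly semistable under precisely the hypothesis $2r^e<\delta<3r^e$. Your sketch of the ``not strongly semistable'' half is on target and is essentially what that reference does: the Fermat relation lets one reduce $p^e$th powers modulo $\delta$, and the numerical inequality then produces an explicit low-degree syzygy destabilising some Frobenius pull-back.

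The semistability half, however, is a genuine gap in your plan. You correctly observe that controlling low-degree global syzygies only excludes sub-line-bundles of the form $\mathcal O_{Y_1}(d)$, which is not enough because $\operatorname{Pic}(Y_1)$ is large. But your fallback, ``combine stability on $\mathbb P^2$ with an appropriate restriction argument to $Y_1$'', does not close this: the effective restriction theorems of Flenner, Bogomolov and Langer only guarantee semistability of the restriction once the curve degree exceeds a bound depending on the discriminant of the bundle, and for $\Syz(x^2,y^2,z^2)$ this bound does not obviously cover every $\delta\geq 5$ (and the Mehta--Ramanathan theorem only gives a generic curve, not the specific Fermat curve). The cited result establishes semistability by a method specific to $\Syz(x^a,y^a,z^a)$ on Fermat curves; you would need either to reproduce that argument or to verify a suitable restriction bound for all $\delta\geq 5$, neither of which your plan supplies.
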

\begin{proof}
First of all, note that $Y \to \Spec k[t]$ is flat by \cite[Proposition III.9.7]{hartshornealgebraic}. Furthermore, the sheaf of relative differentials is locally free so that the morphism is indeed smooth.
The specialisation $t = 1$ yields the syzygy bundle $\Syz(x^2, y^2, z^2)(3)$ which is semistable but not strongly semistable under the given numerical condition by virtue of \cite[Corollary 2]{brennermiyaoka}.

The specialisation $t = 0$ yields $\Syz(x^{2}, y^{2}, xy)(3)$ which is already defined on $\mathbb{P}^1_k$ and splits as $\mathcal{O}_{Y_0}^2$ (the trivialising sections are the syzygies $(0, x, -y), (y,0,-x)$). Therefore $\mathcal{S}_0$ is strongly semistable and hence $\mathcal{S}_\eta$ is also strongly semistable by the openness of semistability.
\end{proof}

The bundles in this family have rank $2$ and degree $0$. We do not know whether the bundles  are semistable for all $t$, but they are for an open subset. The locus of points parametrizing  strongly semistable bundles need not be open. Note that the generic bundle in this family is not trivial. Indeed, the specialisation at $t=1$ does not have a non-trivial global section and by semicontinuity (\cite[Theorem III.12.8]{hartshornealgebraic}) this also holds generically.

If $\delta $ is a prime number $\geq 5$ then there do exist prime numbers $p$ fulfilling the given numerical condition. On the other hand, for every prime number $p$ there exist natural numbers $\delta$ with $2p < \delta  < 3p$ and $\delta  \geq 5$. Hence, Theorem \ref{main1} provides examples for all characteristics.

Localising we immediately obtain

\begin{Ko}
Let $\delta  \geq 5$ be a natural number and let $p$ be a prime number such that there exists $e$ so that $2 r^{e}  < \delta  < 3r^{e}  $, where $r=p \mod \delta $. Let $k$ be a field of characteristic $p$ and let $V= k[t]_{(t - 1)}$. Then the syzygy bundle 
\[\mathcal{S} = \Syz(x^{2}, y^{2}, tz^{2} + (1-t)xy)(3) \] 
on the smooth relative curve
\[ Y = \Proj V [x,y,z]/(x^\delta  + y^\delta  - z^\delta ) \longrightarrow \Spec V \]
has  strongly semistable generic fibre and semistable but not strongly semistable
special fibre.
\end{Ko}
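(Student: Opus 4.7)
The corollary is an immediate consequence of Theorem \ref{main1} obtained by base change along the localisation morphism $\Spec V \to \Spec k[t]$. The plan is simply to identify the two points of $\Spec V$ with the corresponding points of $\Spec k[t]$ and invoke the theorem.

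In detail, one first observes that $V = k[t]_{(t-1)}$ is a discrete valuation ring with residue field $k$ and fraction field $k(t)$. The natural morphism $\Spec V \to \Spec k[t]$ sends the generic point of $\Spec V$ to the generic point $\eta$ of $\Spec k[t]$ and sends the closed point of $\Spec V$ (corresponding to the maximal ideal $(t-1)$) to the closed point $t=1$ of $\Spec k[t]$. Since the bundle $\mathcal{S}$ and the relative curve $Y \to \Spec k[t]$ of the corollary are defined by formally the same equations as in Theorem \ref{main1}, the base changes to $\Spec V$ are obtained by pulling back along this morphism, and flat base change identifies the syzygy bundle on $Y \times_{\Spec k[t]} \Spec V$ with the pullback of the syzygy bundle on the original $Y$.

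Consequently, the fibre of the base-changed family over the closed point of $\Spec V$ coincides with the fibre $\mathcal{S}_1$ of Theorem \ref{main1}, which is semistable but not strongly semistable under the given numerical hypothesis on $p$, $\delta$ and $r$. Similarly, the fibre over the generic point of $\Spec V$ is the bundle $\mathcal{S}_\eta$ on $Y_\eta$ from Theorem \ref{main1}, which is strongly semistable. This gives precisely the two properties asserted.

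There is no real obstacle here: the content is entirely in Theorem \ref{main1}, and the corollary is just a routine reformulation obtained by passing from the affine line $\Spec k[t]$ to the stalk $\Spec V$ at the closed point $t=1$, so that one works over a discrete valuation domain instead of a polynomial ring.
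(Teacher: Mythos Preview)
Your proposal is correct and follows exactly the approach the paper takes: the corollary is stated as an immediate consequence of Theorem \ref{main1} obtained by localising at $t=1$, and you have simply spelled out in detail what this localisation/base-change argument entails.
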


\begin{Bsp}
For $\delta =5$ exactly the prime characteristics $p$ with $p=2,3 \mod 5$ fulfill the numerical condition of Theorem \ref{main1}. For $p=1,4 \mod 5$ we do not know whether there are semistable but not strongly semistable fibres in the family.
\end{Bsp}

\section{Sufficient criteria for semistability}
\label{criteria}

In this section we provide several sufficient criteria for deciding whether a given vector bundle on a curve is (strongly) semistable. Similar considerations also occur in \cite[Chapter 3]{kaiddiss}. We say that a global section $s \in \Gamma (Y,\Syz(m))$ is a syzygy of total degree $m$.

\begin{Le}
\label{LGlobalSectionsStrongSst}
Let $Y$ be a smooth projective curve of genus $g > 0$ over a field $k$ of positive characteristic $p > 0$ with an ample line bundle $\mathcal{O}_Y(1)$. Let $\mathcal{S}$ be a vector bundle of slope $\mu$. Then $\mathcal{S}$ is strongly semistable if and only if $({F^e}^\ast \mathcal{S})(m)$ has no global section for $m < -\frac{\mu}{\deg \mathcal{O}_Y(1)} q$ for all $q = p^e$. 

If moreover, $\mathcal{O}_Y(1)$ has a non-trivial global section then it is enough that for all $q=p^e$ the bundle $({F^e}^\ast \mathcal{S})(m)$ has no non-trivial global sections, where $m = -([\frac{\mu}{\deg \mathcal{O}_Y(1)}q] +1)$.
\end{Le}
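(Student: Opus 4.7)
The plan is to establish both directions of the ``if and only if'' and then deduce the ``moreover'' clause.

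For the forward direction, strong semistability of $\mathcal{S}$ means every Frobenius pullback $F^{e*}\mathcal{S}$ is semistable, and hence so is the twist $(F^{e*}\mathcal{S})(m)$, which has slope $q\mu + m\deg\mathcal{O}_Y(1)$. If $m < -q\mu/\deg\mathcal{O}_Y(1)$ then this slope is strictly negative; a non-zero global section would give $\mathcal{O}_Y \hookrightarrow (F^{e*}\mathcal{S})(m)$, and the saturation of its image would be a line sub-bundle of non-negative degree, contradicting semistability.

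For the converse I argue the contrapositive. Suppose $\mathcal{S}$ is not strongly semistable, pick the smallest $e_0$ for which $F^{e_0*}\mathcal{S}$ fails to be semistable, and choose a destabilizing subbundle $\mathcal{F}_0 \subset F^{e_0*}\mathcal{S}$ of rank $r$, so that $\epsilon := \mu(\mathcal{F}_0) - p^{e_0}\mu > 0$. Frobenius on a smooth variety is flat (Kunz), so for any $e_1 \geq 0$ the pullback $F^{e_1*}\mathcal{F}_0$ is a subbundle of $F^{(e_0+e_1)*}\mathcal{S}$, and its slope is $p^{e_0+e_1}\mu + p^{e_1}\epsilon$. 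Twisting the inclusion by $\mathcal{O}_Y(m)$ and applying Riemann--Roch gives
\[ h^0\bigl((F^{(e_0+e_1)*}\mathcal{S})(m)\bigr) \geq h^0\bigl((F^{e_1*}\mathcal{F}_0)(m)\bigr) \geq r\bigl(p^{e_1}\mu(\mathcal{F}_0) + m\deg\mathcal{O}_Y(1) + 1 - g\bigr). \]
So a non-zero global section is forced as soon as $m\deg\mathcal{O}_Y(1) > g - 1 - p^{e_1}\mu(\mathcal{F}_0)$, while the slope constraint requires $m\deg\mathcal{O}_Y(1) < -p^{e_0+e_1}\mu$. The permitted range for $m$ has length $(p^{e_1}\epsilon - g + 1)/\deg\mathcal{O}_Y(1)$, which exceeds $1$ once $p^{e_1}\epsilon > \deg\mathcal{O}_Y(1) + g - 1$. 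For such $e_1$ an integer $m$ exists in the interval, producing the desired violation with $e = e_0 + e_1$ and $m < -p^e\mu/\deg\mathcal{O}_Y(1)$.

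The ``moreover'' clause is immediate. A non-trivial section $s \in \Gamma(Y,\mathcal{O}_Y(1))$ yields an injection $\mathcal{O}_Y \hookrightarrow \mathcal{O}_Y(1)$; tensoring with $(F^{e*}\mathcal{S})(m)$ produces injections $(F^{e*}\mathcal{S})(m) \hookrightarrow (F^{e*}\mathcal{S})(m+1)$, so $h^0((F^{e*}\mathcal{S})(m)) \leq h^0((F^{e*}\mathcal{S})(m+1))$. Consequently vanishing of $h^0$ at the single largest integer $m_0 = -([q\mu/\deg\mathcal{O}_Y(1)]+1)$ satisfying the slope inequality propagates downwards to all smaller $m$. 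The one point of delicacy lies in the contrapositive: one must make sure the $p^{e_1}$-amplified slope gap $p^{e_1}\epsilon$ eventually exceeds the constant Riemann--Roch contributions from $g$ and $\deg\mathcal{O}_Y(1)$. This is purely a matter of choosing $e_1$ large, so I expect no substantive obstacle.
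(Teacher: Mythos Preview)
Your proof is correct and follows essentially the same approach as the paper: both directions are handled identically, and for the converse you and the paper both pull back a destabilizing subbundle by a further Frobenius power until Riemann--Roch forces a section in the required twist range. The only cosmetic differences are that the paper does not bother choosing $e_0$ minimal and fixes the twist $l$ first (so that $\mu\bigl((F^{(e+e')\ast}\mathcal{S})(l)\bigr)\in[-\deg\mathcal{O}_Y(1),0)$) before choosing $e'$, whereas you package both choices into a single interval--length estimate; the content is the same.
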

\begin{proof}
If $F^{e^\ast} \mathcal{S}(m)$ has a non-trivial global section, with $m < -\frac{\mu}{\deg \mathcal{O}_Y(1)} q$, then we obtain an exact sequence $0 \to \mathcal{O}_Y(-m) \to F^{e^\ast} \mathcal{S}$ contradicting semistability.

Conversely, let $0 \to \mathcal{E} \to F^{e^\ast} \mathcal{S}$ be exact, where $\mathcal{E}$ is a bundle with $\mu(\mathcal{E}) > q \mu$. Let $q' = p^{e'}$ be such that $q'(\mu(\mathcal{E})-q \mu) \geq g + \deg \mathcal{O}_Y(1) $, and let $l$ be such that
\[- \deg \mathcal{O}_Y(1) \leq \mu ( F^{(e+e')^\ast} \mathcal{S} )(l)) = p^{e+e'} \mu + l \deg \mathcal O_Y(1)  < 0,\] i.\,e.\, $-1 - \frac{p^{e+e'}\mu}{\deg \mathcal{O}_Y(1)} \leq l < - \frac{p^{e+e'}\mu}{\deg \mathcal{O}_Y(1)}$. We have the injection
$(F^{e'^\ast} \mathcal{E}) (l) \to ( F^{(e+e')^\ast} \mathcal{S} )(l)$ and
\[ \mu  ((F^{e'^\ast} \mathcal{E}) (l) ) = q' \mu({\mathcal E}) + l \deg {\mathcal O}_Y(1) \geq q' \mu ({\mathcal E}) -\deg {\mathcal O}_Y(1)  -p^{e+e'} \mu \geq g. \]
By Riemann-Roch for vector bundles (see e.\,g.\ \cite[Theorem 7.D.3]{patilstorchalggeo}) applied to $\mathcal{T}=(F^{e'^\ast}\mathcal{E})(l)$ we therefore obtain $\frac{\chi(\mathcal{T})}{\rk \mathcal{T}} = \mu(\mathcal{T}) + \chi(\mathcal{O}_Y) \geq 1$. It follows that $\mathcal{T}$ and therefore also $(F^{(e+e')^\ast} \mathcal{S})(l)$ has a non-trivial global section.

If $\mathcal{O}_Y(1)$ and $\mathcal{S}(m)$ both have a non-trivial global section then a fortiori $\mathcal{S}(m+1)$. Hence, in this case it is enough to check that $F^{e^\ast} \mathcal{S}$ has no global sections in degree $m$, where $m$ is the maximal $m \in \mathbb{Z}$ such that $\mu((F^{e^\ast} \mathcal{S})(m)) < 0$. 
\end{proof}

We will call the degree occuring in the last statement of the previous lemma the critical degree. A similar proof yields the following sufficient condition for semistability.

\begin{Le}
\label{KGlobalSectionsSst}
Let $Y$ be a smooth projective curve of genus $g >0$ over a field $k$ of positive characteristic $p > 0$ with an ample line bundle $\mathcal{O}_Y(1)$. Let $\mathcal{S}$ be a vector bundle of slope $\mu$. Suppose that $({F^{e}}^\ast \mathcal{S})(m)$ has no global sections for $m < -\frac{\mu}{\deg \mathcal{O}_Y(1)}q$ for some $q = p^e \geq g + \deg \mathcal{O}_Y(1)$. 
Then $\mathcal{S}$ is semistable. 

If moreover, $\mathcal{O}_Y(1)$ has a non-trivial global section then $\mathcal{S}$ is semistable if $(F^{e^\ast} \mathcal{S})(m)$ has no global sections, where $m = -([\frac{\mu}{\deg \mathcal{O}_Y(1)}q] +1)$ for some $q = p^e \geq g + \deg \mathcal{O}_Y(1)$.
\end{Le}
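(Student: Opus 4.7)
The plan is to follow the template of Lemma \ref{LGlobalSectionsStrongSst} but without its secondary Frobenius twist: since we only need to rule out a destabilising subsheaf of $\mathcal{S}$ itself (not of some further Frobenius pullback), the given $q = p^e$ will play the role that $q \cdot q'$ played in the earlier proof, and this is precisely what the numerical hypothesis $q \geq g + \deg \mathcal{O}_Y(1)$ is engineered to guarantee. I would argue by contraposition: assume $\mathcal{S}$ is not semistable and manufacture a nonzero global section of $(F^{e^\ast}\mathcal{S})(m)$ for some $m < -\mu q / \deg \mathcal{O}_Y(1)$, contradicting the hypothesis.

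Concretely, pick a saturated destabilising subsheaf $\mathcal{E} \subset \mathcal{S}$, which is automatically a subbundle on the smooth curve $Y$ and satisfies $\mu(\mathcal{E}) > \mu$. Frobenius pullback preserves the injection, so $(F^{e^\ast}\mathcal{E}) \hookrightarrow (F^{e^\ast}\mathcal{S})$ with $\mu(F^{e^\ast}\mathcal{E}) = q \mu(\mathcal{E}) > q\mu$. Choose the integer $l$ with $-\mu q / \deg\mathcal{O}_Y(1) - 1 \leq l < -\mu q / \deg\mathcal{O}_Y(1)$, exactly as in the proof of the previous lemma, so that $-\deg\mathcal{O}_Y(1) \leq \mu((F^{e^\ast}\mathcal{S})(l)) < 0$ and in particular $l < -\mu q / \deg\mathcal{O}_Y(1)$.

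The core estimate is
\[\mu((F^{e^\ast}\mathcal{E})(l)) \;=\; q\mu(\mathcal{E}) + l \deg\mathcal{O}_Y(1) \;\geq\; q(\mu(\mathcal{E}) - \mu) - \deg\mathcal{O}_Y(1),\]
and the hypothesis $q \geq g + \deg\mathcal{O}_Y(1)$ is exactly what is needed to push the right-hand side to at least $g$; Riemann-Roch (in the form applied in Lemma \ref{LGlobalSectionsStrongSst}) then produces a nonzero global section of $(F^{e^\ast}\mathcal{E})(l)$, and composing with the inclusion into $(F^{e^\ast}\mathcal{S})(l)$ yields the contradicting section. The step I expect to require the most care is this slope bookkeeping: one must use $\mu(\mathcal{E}) - \mu \geq 1$ to pass from $q \geq g + \deg\mathcal{O}_Y(1)$ to $q(\mu(\mathcal{E})-\mu) \geq g + \deg\mathcal{O}_Y(1)$, which is unproblematic in the rank-two situation of interest later in the paper but is the one arithmetic point that must be verified rather than transplanted verbatim. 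For the refined second statement, multiplication by a chosen nonzero section of $\mathcal{O}_Y(1)$ provides injections $(F^{e^\ast}\mathcal{S})(m) \hookrightarrow (F^{e^\ast}\mathcal{S})(m+1)$, so any nonzero global section at a lower degree propagates upward to the critical degree $m = -(\lfloor \mu q / \deg\mathcal{O}_Y(1) \rfloor + 1)$, and it therefore suffices to test vanishing at that single degree, just as in the corresponding refinement of Lemma \ref{LGlobalSectionsStrongSst}.
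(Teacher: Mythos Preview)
Your proposal is correct and matches the paper's approach exactly: the paper's entire proof is the sentence ``A similar proof yields the following sufficient condition for semistability,'' and your contrapositive argument---destabilising subbundle, Frobenius pullback, choice of $l$, Riemann--Roch---is precisely that adaptation, with the auxiliary $q'$ of Lemma~\ref{LGlobalSectionsStrongSst} absorbed into the single given $q$. Your flag on the step $\mu(\mathcal{E})-\mu \geq 1$ is well taken and is a point the paper glosses over; note that even in rank two this needs $\mu\in\mathbb{Z}$ (equivalently $\deg\mathcal{S}$ even), which does hold in all of the paper's applications since there $\delta$ is even and $\deg\mathcal{S}=-3a\delta$.
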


For a plane curve of degree $\delta$ the bound in the previous lemma becomes $q \geq \frac{1}{2} \delta(\delta -1) +1$.

\begin{Le}
\label{HNFMethod}
Let $Y$ be a smooth projective curve over a field $k$ of positive characteristic $p > 0$ with a fixed ample line bundle $\mathcal{O}_Y(1)$. Let $\mathcal{S}$ be a vector bundle of rank $2$ on $Y$. If $(F^\ast \mathcal{S})(m)$ has a global section $s$ without zeros, where $m < - \frac{p \deg \mathcal{S}}{2 \deg \mathcal{O}_Y(1)}$ and $p \nmid m \deg \mathcal{O}_Y(1)$ then $\mathcal{S}$ is semistable but not strongly semistable. 
\end{Le}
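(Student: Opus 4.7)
The plan is to treat the two claims separately. For failure of strong semistability, I would use that a nowhere vanishing global section of $(F^\ast\mathcal{S})(m)$ trivialises a rank-one subbundle, yielding a line subbundle $\mathcal{O}_Y(-m)\hookrightarrow F^\ast\mathcal{S}$ of degree $-m\deg\mathcal{O}_Y(1)$. The hypothesis $m<-p\deg(\mathcal{S})/(2\deg\mathcal{O}_Y(1))$ says exactly that this degree exceeds $\mu(F^\ast\mathcal{S})=p\deg(\mathcal{S})/2$, so $F^\ast\mathcal{S}$ fails semistability and thus $\mathcal{S}$ is not strongly semistable.

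For semistability of $\mathcal{S}$ itself I would argue by contradiction. Assume $\mathcal{S}$ is not semistable. Since $\mathcal{S}$ has rank $2$, there exists a line subbundle $\mathcal{L}\subset\mathcal{S}$ with $\deg\mathcal{L}>\deg\mathcal{S}/2$. By flatness of Frobenius on the regular curve $Y$ (Kunz), the pullback $F^\ast\mathcal{L}\hookrightarrow F^\ast\mathcal{S}$ is still a line subbundle, of degree $p\deg\mathcal{L}>p\deg\mathcal{S}/2$. Thus $F^\ast\mathcal{S}$ admits two line subbundles of slope strictly greater than $\mu(F^\ast\mathcal{S})$, namely $F^\ast\mathcal{L}$ and $\mathcal{O}_Y(-m)$, and a case split will finish the argument.

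If these two subbundles coincide inside $F^\ast\mathcal{S}$, their degrees agree, giving $p\deg\mathcal{L}=-m\deg\mathcal{O}_Y(1)$, which contradicts the assumption $p\nmid m\deg\mathcal{O}_Y(1)$. If they are distinct, the induced map $F^\ast\mathcal{L}\oplus\mathcal{O}_Y(-m)\to F^\ast\mathcal{S}$ has rank-$2$ image, so the map of determinants is generically an isomorphism and hence yields an injection of line bundles $F^\ast\mathcal{L}\otimes\mathcal{O}_Y(-m)\hookrightarrow\det F^\ast\mathcal{S}$. Comparing degrees gives $p\deg\mathcal{L}-m\deg\mathcal{O}_Y(1)\leq p\deg\mathcal{S}$, which contradicts the fact that each summand on the left already exceeds $p\deg\mathcal{S}/2$.

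The decisive subtlety is the role of the divisibility hypothesis $p\nmid m\deg\mathcal{O}_Y(1)$, which is needed solely to exclude the coincidence subcase; without it the Frobenius pullback of a destabilising line subbundle of $\mathcal{S}$ could conceivably agree with the subbundle obtained from the section, and no contradiction would follow from degrees alone. The determinant step in the distinct case is standard but must be justified by observing that two distinct rank-$1$ subbundles of a rank-$2$ bundle span generically, making the determinant map nonzero and hence injective.
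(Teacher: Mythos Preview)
Your proof is correct and follows the same underlying idea as the paper's: the nowhere-vanishing section gives a saturated destabilising line subbundle $\mathcal{O}_Y(-m)\subset F^\ast\mathcal{S}$, and if $\mathcal{S}$ were unstable one would obtain a second destabilising line subbundle $F^\ast\mathcal{L}\subset F^\ast\mathcal{S}$; the divisibility hypothesis then rules out their coincidence.

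The packaging differs. The paper observes that for a rank-two bundle any Harder--Narasimhan filtration is automatically \emph{strong} (the graded pieces are line bundles), so the filtration $0\subset\mathcal{O}_Y(-m)\subset F^\ast\mathcal{S}$ is the strong HN filtration of $F^\ast\mathcal{S}$; it then invokes the fact that the strong HN filtration of $\mathcal{S}$, if nontrivial, pulls back to that of $F^\ast\mathcal{S}$, forcing $\mathcal{O}_Y(-m)\cong F^\ast\mathcal{L}$ and hence $p\mid m\deg\mathcal{O}_Y(1)$. You instead unfold this uniqueness statement into an explicit case split, handling the ``distinct subbundles'' case by the determinant injection $F^\ast\mathcal{L}\otimes\mathcal{O}_Y(-m)\hookrightarrow\det F^\ast\mathcal{S}$ and a degree count. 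Your route is more elementary and self-contained (no appeal to the theory of strong HN filtrations), while the paper's is more conceptual and explains why the phenomenon is really a statement about Frobenius descent of the HN filtration.
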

\begin{proof}
First of all, since $\mathcal{S}$ has rank two any Harder-Narasimhan filtration is automatically strong. Moreover, $0 \to \mathcal{O}_Y(-m) \xrightarrow{\cdot s} F^\ast \mathcal{S}$ is a strong Harder Narasimhan filtration of $F^\ast \mathcal{S}$. By the second assumption $\mathcal{O}_Y(-m)$ cannot be a Frobenius pullback of another line bundle. Hence, $\mathcal{S}$ is semistable.
\end{proof}

Note that this method is in principle also applicable for smooth surfaces since we may pass to the reflexive hull of a destabilising subsheaf which is locally free (\cite[Corollary 1.4]{hartshornestablereflexive}).

\section{Restricting a syzygy bundle to varying curves - generic case}
\label{varycurvegeneric}

In this section we want to ``fix the syzygy bundle and deform the curve''. What we mean by this is that we fix a syzygy bundle $\mathcal{S}$ on ${\mathbb P}^2_k$ and study the restriction of $\mathcal{S}$ to a family of plane curves $Y_t \subset {\mathbb P}^2_k$ of a certain degree $\delta$. So $\mathcal{S} =\Syz(f_1, \ldots , f_n) $ with homogeneous polynomials $f_i \in k[x,y,z]$ and $Y_t =V_+(G_t)$ where $G_t \in k[t,x,y,z]$ is homogeneous with respect to $x,y,z$ and where $t$ is a new variable of degree zero. If $\mathcal{S}$ is semistable on ${\mathbb P}^2$, then it is natural to expect that for (very) generic $t$ the restriction to $Y_t$ is (strongly) semistable, but for specific $t$ anything may happen. There are many strong results in this direction (\cite{mehtaramanathanrestriction}, \cite{flennerrestriction}, \cite{bogomolovstability}, \cite{brennerstronglysemistable}, \cite{langersemistable}). In fact, \cite[Theorem 4.1]{langerrestrictionsemistable} shows that the restriction of $\mathcal{S}$ to a generic hypersurface in $\mathbb{P}^2_k$ of sufficiently high degree is strongly semistable.

We shall study the semistability properties of ${\mathcal S}_t =\Syz(f_1, \ldots , f_n)|_{Y_t}$ by looking at the existence of non-trivial global sections of the Frobenius pull-backs
\[ F^{e^\ast}\mathcal{S}_t \cong \Syz(f_1^q, \ldots , f_n^q)|_{Y_t} \, . \]
($q=p^e$) in certain critical degree twists of the bundle. The existence of such non-trivial global sections is equivalent to the property that certain systems of linear equations have a non-zero solution, which in turn depends on the (non)vanishing of certain determinants. These determinants will be non-zero polynomials in $t$, but for certain values of $t$ they will have a zero producing a (strong) semistability behaviour different from the generic behaviour.

We will mainly work with the syzygy bundles $\Syz(x^a,y^a,z^a)$ and their restrictions to a family of plane curves, where we allow transcendental coefficients. For $a=1$ this is directly related to the Hilbert-Kunz multiplicity of the corresponding coordinate ring.

\begin{Theo}
\label{PGenericStronglySst}
Let $k$ denote a field of positive characteristic $p > 0$ and consider the smooth generic plane projective curve $Y$ of even degree $\delta$ (prime to $p$) given by the homogeneous coordinate ring
\[ k(t)[x,y,z]/(x^\delta+ y^\delta +t x^{\frac{\delta}{2}}y^{\frac{\delta}{2}} - z^\delta) \,  .\]
Then the syzygy bundle $\Syz(x^a, y^a, z^a)$ ($a \in {\mathbb N}_+$ such that $\delta \nmid a$) is strongly semistable on $Y$.
\end{Theo}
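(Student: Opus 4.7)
The plan is to apply Lemma~\ref{LGlobalSectionsStrongSst}. The bundle $\mathcal{S} = \Syz(x^a, y^a, z^a)$ has rank $2$ and, via the defining sequence $0 \to \mathcal{S} \to \mathcal{O}_Y(-a)^3 \to \mathcal{O}_Y \to 0$ and the fact that $\deg \mathcal{O}_Y(1) = \delta$, degree $-3a\delta$ and slope $\mu = -3a\delta/2$; moreover $\mathcal{O}_Y(1)$ has the non-trivial global sections $x,y,z$. The stronger form of the lemma thus reduces the claim to showing that, for every $q = p^e$, the twisted Frobenius pull-back $F^{e\ast}\mathcal{S}(m_q) = \Syz(x^{aq},y^{aq},z^{aq})(m_q)|_Y$ has no non-trivial global section at the critical degree $m_q = \lceil 3aq/2 \rceil - 1$.

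A global section at degree $m_q$ is equivalent to a quadruple $(A,B,C,H)$ of homogeneous polynomials over $k(t)$, with $A,B,C$ of degree $\lceil aq/2 \rceil - 1$ and $H$ of degree $m_q - \delta$, satisfying
\[Ax^{aq} + By^{aq} + Cz^{aq} = H\cdot\bigl(x^\delta + y^\delta + tx^{\delta/2}y^{\delta/2} - z^\delta\bigr).\]
Since $m_q < 2aq$, the Koszul syzygies of $x^{aq},y^{aq},z^{aq}$ on $\mathbb{P}^2$ contribute nothing at this degree, so the condition above is a homogeneous linear system in the coefficients of $A,B,C,H$ over $k(t)$.

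I would then reduce the generic vanishing to existence at closed points: extend the family to $Y' \to \Spec k[t]$ and apply upper-semicontinuity of $h^0$ in flat families (\cite[Theorem III.12.8]{hartshornealgebraic}). For each fixed $q$, the locus $\{t_0 \in \mathbb{A}^1_k : h^0(Y_{t_0}, F^{e\ast}\mathcal{S}_{Y_{t_0}}(m_q)) > 0\}$ is Zariski closed, so it suffices to exhibit one $t_0 \in k$ at which this cohomology vanishes; the resulting non-empty open set automatically contains the generic point $\eta$. Importantly, a different $t_0$ may be chosen for each $q$, so one need not produce a single specialisation good uniformly in $q$.

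The crux---and the main obstacle---is the per-$q$ existence of such a $t_0$. A Hilbert-function count, using the hypothesis $\delta \nmid a$ together with $\gcd(\delta,p) = 1$ (so that $\delta \nmid aq$, giving a clean reduction $z^{aq} \equiv z^{aq \bmod \delta} \cdot G^{\lfloor aq/\delta \rfloor}$ modulo the defining relation), shows that the source of the linear map $(A,B,C,H) \mapsto Ax^{aq} + By^{aq} + Cz^{aq} - HF_{t_0}$ has strictly smaller dimension than the target, so injectivity is the generic behaviour. Verifying it amounts to showing that a maximal minor of the presentation matrix is a non-zero polynomial in $t$. The natural candidate $t_0 = 0$ (the Fermat curve) works whenever the Frobenius-orbit condition of \cite{brennermiyaoka} is met; for the remaining cases, the deformation term $tx^{\delta/2}y^{\delta/2}$ is essential, since at generic $t$ it breaks the $\mu_\delta^3$-symmetry of the Fermat curve and one must argue combinatorially that the extra syzygies responsible for failure of strong semistability at $t=0$ do not survive under the perturbation. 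Carrying out this combinatorial step uniformly in $q$ is the heart of the proof.
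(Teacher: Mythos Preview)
Your reduction via Lemma~\ref{LGlobalSectionsStrongSst} is correct, and the translation into a linear system whose injectivity is governed by the non-vanishing of a determinant in $k[t]$ is exactly the right framework. The semicontinuity detour is valid but superfluous: working directly over $k(t)$, the question is simply whether that determinant is a non-zero polynomial in $t$, which is the same as your ``some maximal minor is non-zero''. The issue is that you stop precisely at the point where the actual content of the proof lies. Your final paragraph correctly diagnoses that specialising to $t_0=0$ (the Fermat curve) fails for infinitely many $q$, and then says only that ``one must argue combinatorially'' for the remaining cases---but this is an acknowledgement of a gap, not a proof.

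The paper closes this gap without any case distinction or search for a good $t_0$, by a direct degree-in-$t$ argument on the matrix. After using the relation $z^\delta=f:=x^\delta+y^\delta+tx^{\delta/2}y^{\delta/2}$ to eliminate $z$ (so one works on $\mathbb{P}^1_{k(t)}$ with the bundles $\mathcal{F}_l=\Syz(x^{aq},y^{aq},f^l)$ and $\mathcal{F}_{l+1}$), the resulting coefficient matrix is a Toeplitz-type square matrix whose entries are the coefficients $D_i$ of $f^l=\sum_i D_i x^i y^{\delta l-i}$. The key observation is that the middle coefficient $D_{\delta l/2}$ equals $t^l+(\text{lower order in }t)$, while every other $D_j$ has $t$-degree at most $l-1$; since $D_{\delta l/2}$ sits exactly on the main diagonal of the square matrix, the product of the diagonal entries contributes the unique top-degree term $t^{l(aq-\delta l/2)}$ to the determinant. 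Hence the determinant is a non-zero element of $k[t]$, so invertible in $k(t)$, for \emph{every} $q$ simultaneously. The same argument handles $\mathcal{F}_{l+1}$. This single observation replaces the unspecified ``combinatorial step uniformly in $q$'' that your proposal leaves open.
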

\begin{proof}
By Lemma \ref{LGlobalSectionsStrongSst} it is enough to show that for every $q = p^e$ the syzygy bundle $\Syz(x^{aq}, y^{aq}, z^{aq})$ has no global sections of total degree $< \frac{3}{2} aq$.

Write $aq = \delta l + r$ with $0 < r < \delta$ (note that we have strict inequality due to the constraints we imposed on $a$ and $\delta$). By \cite[Lemma 1]{brennermiyaoka} global sections of minimal degree of $F^{e^\ast} \mathcal{S} = \Syz(x^{aq}, y^{aq},z^{aq})$ stem from global sections of \[\mathcal{F}_l = \Syz(x^{aq}, y^{aq}, (x^\delta + y^\delta + tx^{\frac{\delta}{2}}y^{\frac{\delta}{2}})^l)\] (add $r$ to the total degree) or of \[\mathcal{F}_{l+1} = \Syz(x^{aq}, y^{aq}, (x^\delta + y^\delta + tx^{\frac{\delta}{2}}y^{\frac{\delta}{2}})^{l+1})\] (of the same total degree) or a sum of such syzygies\footnote{A section $(s_1, s_2, s_3)$ of $\mathcal{F}_l$ yields the syzygy $(z^r s_1, z^r s_2, s_3)$ of $\Syz(x^{aq}, y^{aq}, z^{aq})$ and a section $(t_1, t_2, t_3)$ of $\mathcal{F}_{l+1}$ yields the syzygy $(t_1, t_2, z^{\delta -r} t_3)$ of $\Syz(x^{aq}, y^{aq}, z^{aq})$.}.
Set $f = x^\delta + y^\delta + tx^{\frac{\delta}{2}}y^{\frac{\delta}{2}}$. To determine the global sections of $\mathcal{F}_{l}$ and $\mathcal{F}_{l+1}$ we can work on the projective line ${\mathbb P}^1_{k(t)}$ or the ring $k(t)[x,y]$ respectively. The critical degree of $\mathcal{S}$ is $ \lceil \frac{3aq}{2} \rceil -1$ while the critical degree of $\mathcal{F}_l$ is $\lceil \frac{3aq -r }{2} \rceil - 1$ and that of $\mathcal{F}_{l+1}$ is $\lceil \frac{3aq +\delta - r }{2} \rceil - 1$. We will show that $\mathcal{F}_l$ and $\mathcal{F}_{l+1}$ have no non-trivial global sections of critical degree. Then also $F^{e^\ast}\mathcal{S}$ will not have non-trivial global sections of critical degree.

First we consider the case $\mathcal{F}_l$. Assume that there is a non-trivial syzygy $\alpha x^{aq} + \beta y^{aq} + \gamma f^l = 0$ of critical degree $m$. In particular, we must have $\gamma f^l \equiv 0 \mod (x^{aq}, y^{aq})$ (conversely, if this condition holds for some non-zero $\gamma$ then we obtain a non-trivial syzygy). That is, we must have that each coefficient $\omega_v$ (in $k[t]$) in
\[ \gamma f^l = \sum_v \omega_v x^v y^{m-v} \]
with $m - v < aq$ and $v < aq$ is zero. This is the case if and only if $w_v = 0$ for $m - aq + 1= \frac{\delta l}{2} \leq v \leq  aq -1 = m - \frac{\delta l}{2}$. Let us write \[ f^l = \sum_{i=0}^{\delta l} D_i x^i y^{\delta l - i} \text{ and } \gamma = \sum_{u=0}^{m - \delta l} c_u x^u y^{m - \delta l -u}. \] Then $\omega_v =D_v c_0 + \ldots + D_{v-m+ \delta l }  c_{m - \delta l } $. In particular, we set $D_i = 0$ if $i < 0$ or if $i > \deg f^l$. The condition that the $w_v$ be zero translates to a system of linear equations in the $c_u$ which we may write in the form of a square matrix of length $m-\delta l +1 =aq - \frac{\delta l}{2}$ as
\[\begin{pmatrix} w_{ \frac{\delta l}{2} } \\ w_{ \frac{\delta l}{2} +1 } \\ \vdots \\ w_{m - \frac{\delta l}{2} -1} \\ w_{m -  \frac{\delta l}{2}  } \end{pmatrix} \! = \!
\begin{pmatrix} D_{\frac{\delta l}{2}} & D_{\frac{\delta l}{2} -1 } & D_{\frac{\delta l}{2} -2} & \ldots & D_{\frac{3}{2} \delta l - m}\\ 
D_{\frac{\delta l}{2} + 1} &  D_{\frac{\delta l}{2}} &  D_{\frac{\delta l}{2} -1 } & \ldots & D_{\frac{3}{2} \delta l - m + 1}\\
\vdots & \vdots & \vdots & \ddots & \vdots\\
D_{m - \frac{\delta l}{2} - 1} & D_{m - \frac{\delta l}{2} - 2} & D_{m - \frac{\delta l}{2} - 3} & \ldots & D_{\frac{\delta l}{2} -1}\\
D_{m - \frac{\delta l}{2}} & D_{m - \frac{\delta l}{2} - 1} & D_{m - \frac{\delta l}{2} - 2} & \ldots & D_{\frac{\delta l}{2}} 
\end{pmatrix} \begin{pmatrix} c_0 \\ c_1 \\ \vdots \\ c_{m - \delta l - 1} \\ c_{m - \delta l} \end{pmatrix}. \]
The $i$th column of the matrix is obtained by taking the coefficients of $f^l$ whose degrees in $x$ range from $\frac{\delta l}{2} - i +1$ to $aq - i$.

This system has a non-trivial solution if and only if the determinant, call it $D^{(q)}$, of this square matrix is zero. Note that $D^{(q)}$ is a polynomial in $k[t]$. 

The coefficient $D_{\frac{\delta l}{2}}$ is of the form $t^l +$ terms of lower degree and the other entries $D_j$ for $j \neq \frac{\delta l}{2}$ are polynomials of degree $\leq l -1$ in $t$. Since the $D_{\frac{\delta l}{2}}$ occur exactly in the diagonal of the matrix we have that the determinant is a polynomial of degree $l(aq - \frac{\delta l}{2})$ in $t$. In particular, it is not the zero polynomial and hence invertible in $k(t)$. Therefore there is no non-trivial solution to the above system.

Next, we turn our attention to the case of $\mathcal{F}_{l+1}$. Again we denote the critical degree by $m$. By a similar argument to the previous case we obtain that there is a non-trivial syzygy $(\alpha, \beta, \gamma)$ of total degree $m$ if and only if the $w_v$ in $\gamma f^{l+1} = \sum_{v = 0} w_v x^v y^{m-v}$ are zero for $m - aq + 1 = \frac{\delta (l+1)}{2}\leq v \leq aq -1$. Again we write \[f^{l+1} = \sum_{i=0}^{\delta (l+1)} E_i x^i y^{\delta l - i} \text{ and } \gamma = \sum_{u=0}^{m - \delta(l+1)} c_u x^u y^{m - (l+1) \delta -u}. \] 
We set $E_i = 0$ for $i < 0$ or $i > \deg f^{l+1}$.
This translates to a system of linear equations in the $c_i$ which we write with the following square matrix of length $m-\delta ( l +1) +1 = \frac{aq - \delta + r}{2} = aq - \frac{\delta(l+1)}{2}$ as

\[\! \begin{pmatrix} E_{\frac{\delta (l+1)}{2}} & E_{\frac{\delta (l+1)}{2} - 1} & E_{\frac{\delta (l+1)}{2}- 2} & \ldots & E_{\frac{3}{2}\delta (l+1) - m}\\
E_{\frac{\delta (l+1)}{2} + 1} & E_{\frac{\delta (l+1)}{2}} & E_{\frac{\delta (l+1)}{2} - 1} & \ldots & E_{\frac{3}{2} \delta (l+1) - m + 1}\\
\vdots & \vdots & \vdots & \ddots & \vdots\\
E_{m - \frac{\delta(l+1)}{2} - 1} & E_{m - \frac{\delta(l+1)}{2} - 2} &E_{m - \frac{\delta(l+1)}{2} - 3} & \ldots & E_{\frac{\delta (l+1)}{2} - 1}\\
E_{m - \frac{\delta(l+1)}{2}} & E_{m - \frac{\delta(l+1)}{2} - 1}& E_{m - \frac{\delta(l+1)}{2} - 2} & \ldots & E_{\frac{\delta (l+1)}{2}}
\end{pmatrix} \! \begin{pmatrix} c_0 \\ c_1 \\ \vdots \\ c_{m - \delta (l+1) - 1} \\ c_{m - \delta (l+1)} \end{pmatrix}\!.\]

The coefficient $E_i$ is a polynomial in $t$ of degree $l+1$ if $i = \frac{\delta(l+1)}{2}$ and of degree $< l + 1$ otherwise. Hence, we obtain that the determinant, call it $E^{(q)}$, is a polynomial of degree $(l+1) (aq - \frac{\delta(l+1)}{2})$ in $t$. In particular, the matrix is invertible over $k(t)$. 
\end{proof}

\begin{Bem}
In the class of examples considered in Theorem \ref{PGenericStronglySst} we may assume that $\gcd(\frac{\delta}{2}, a) = 1$. For if $m r = \frac{\delta}{2}$
and $m b = a$ then we can obtain $\Syz(x^a, y^a, z^a)$ as the pull-back of $\Syz(u^b,
v^b, w^b)$ on $X = \Proj k[u,v,w]/(u^{2r} + v^{2r} + t u^r v^r - w^{2r})$ along the
mapping $u \mapsto x^m, v \mapsto y^m, w \mapsto y^m$. In particular, if $r = 1$
then $X \cong \mathbb{P}^1_k$ and semistability coincides with strong semistability.
\end{Bem}

\begin{Ko}
\label{PGenericStronglySstgeneric}
Let $k$ denote a field of positive characteristic $p > 0$ and consider the smooth generic plane projective curve $Y$ of even degree $\delta$ (prime to $p$) given by the homogeneous coordinate ring
\[ k(C_\nu)[x,y,z]/(\sum_{\vert\nu\vert = \delta} C_\nu x^{\nu_1} y^{\nu_2} z^{\nu_3}).\]
Then the syzygy bundle $\Syz(x^a, y^a, z^a)$ ($a \in {\mathbb N}_+$ such that $\delta \nmid a$) is strongly semistable on $Y$.
\end{Ko}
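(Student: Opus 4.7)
The plan is to deduce this from Theorem \ref{PGenericStronglySst} by spreading out to the global family of degree-$\delta$ plane curves and applying openness of semistability.

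First I would form the relative curve $\mathcal{Y} \subset \mathbb{P}^2_T$ cut out by $\sum_{|\nu| = \delta} C_\nu x^{\nu_1} y^{\nu_2} z^{\nu_3}$ over the open subscheme $T \subseteq \Spec k[C_\nu : |\nu|=\delta]$ parametrising smooth hypersurfaces. This $T$ is non-empty because the Fermat point $x^\delta + y^\delta - z^\delta$ lies in it (using $p \nmid \delta$), and $T$ is irreducible as an open of an affine space. The syzygy bundle $\Syz(x^a, y^a, z^a)$ on $\mathbb{P}^2_k$ pulls back to a vector bundle $\mathcal{S}$ on $\mathcal{Y}|_T$, and its restriction to any fibre $Y_t$ is the corresponding syzygy bundle; since we are in equicharacteristic $p$, the $e$th iterated Frobenius is compatible with base change along any $t \in T$.

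The main input is then \cite[Theorem 2.8]{maruyamasemistableopen} applied to $F^{e^\ast} \mathcal{S}$ on $\mathcal{Y}|_T$: for each $e$ the locus $U_e \subseteq T$ over which the fibre of $F^{e^\ast} \mathcal{S}$ is semistable is open. The key observation is that Theorem \ref{PGenericStronglySst} produces a point $\xi \in T$ at which \emph{every} $F^{e^\ast} \mathcal{S}$ restricts to a semistable bundle on the fibre, namely the image of the morphism $\Spec k(t) \to T$ specialising $C_{(\delta,0,0)}, C_{(0,\delta,0)}, C_{(0,0,\delta)}, C_{(\delta/2,\delta/2,0)}$ to $1, 1, -1, t$ respectively and all remaining $C_\nu$ to $0$. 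Consequently $\xi \in U_e$, and in particular $U_e$ is non-empty for every $e$.

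Since $T$ is irreducible, every non-empty open of $T$ contains the generic point $\eta$; thus $\eta \in \bigcap_e U_e$, which means $\mathcal{S}$ is strongly semistable on $\mathcal{Y}_\eta$. But $\mathcal{Y}_\eta$ is by construction the generic curve $Y$ of the statement, proving the claim. The only delicate step is checking that the coefficient specialisation defining $\xi$ genuinely lands in the smooth locus $T$ and that its fibre agrees, after base change to $k(t)$, with the curve treated in Theorem \ref{PGenericStronglySst}; both are routine coefficient matchings combined with the smoothness computation for the curve $x^\delta + y^\delta + tx^{\delta/2} y^{\delta/2} - z^\delta$ over $k(t)$.
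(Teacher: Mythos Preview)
Your argument is correct and follows essentially the same strategy as the paper: specialise to the one-parameter family of Theorem \ref{PGenericStronglySst}, observe that strong semistability there forces each relevant open locus over the parameter space to be non-empty, and use irreducibility to catch the generic point $\eta$. The only difference is in the openness tool invoked: the paper passes through Lemma \ref{LGlobalSectionsStrongSst} and uses semicontinuity of $h^0$ (\cite[Theorem III.12.8]{hartshornealgebraic}) for the vanishing of sections in the critical degree, whereas you appeal directly to Maruyama's openness of semistability for each $F^{e^\ast}\mathcal{S}$; both yield the same conclusion.
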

\begin{proof}
By Lemma \ref{LGlobalSectionsStrongSst} it is enough to show that for every $q = p^e$ the syzygy bundle $\Syz(x^{aq}, y^{aq}, z^{aq})$ has no global sections of total degree $< \frac{3}{2} aq$. Since the non-existence of non-trivial sections is an open property by semicontinuity (cf.\ \cite[Theorem III.12.8]{hartshornealgebraic} we may restrict to the curve given by \[z^\delta = x^\delta + y^\delta + t x^{\frac{\delta}{2}}y^{\frac{\delta}{2}}  \] over $k(t)$, where $t$ is an indeterminate. Then the result follows from Theorem \ref{PGenericStronglySst}.
\end{proof}

\begin{Bem}
Corollary \ref{PGenericStronglySstgeneric} can also be deduced (for $a=1$ and $\delta \geq 2$) from a result of Buchweitz and Chen (see \cite[Corollary 1]{buchweitzchenhilbertkunz}). They show that the Hilbert-Kunz multiplicity of the homogeneous coordinate ring of a generic plane curve of degree $\delta \geq 2$ is the minimal possible one. This is equivalent to the strong semistability of the syzygy bundle.

In fact, one can use this to show that $\mathcal{S} = \Syz(x^a,y^a, z^a)$ is strongly semistable on general plane curves of degree $l a$, $l \geq 2$. Consider the morphism $\varphi: \mathbb{P}_k^2 \to \mathbb{P}_k^2$ which maps $x,y,z$ to $x^a, y^a, z^a$. Then $\varphi^\ast \Syz(x,y,z) = \mathcal{S}$ and $\varphi^\ast \mathcal{O}_{\mathbb{P}_k^2}(1) = \mathcal{O}_{\mathbb{P}_k^2}(a)$. So if $\Syz(x,y,z)$ is strongly semistable restricted to some member of the linear system $\mathcal{O}_{\mathbb{P}_k^2}(l)$ then $\mathcal{S}$ will be strongly semistable restricted to the corresponding member of $\mathcal{O}_{\mathbb{P}_k^2}(la)$. 
\end{Bem}

\section{Restricting a syzygy bundle to varying curves - special case}
\label{varycurvespecial} 

We now study semistability properties of the restriction of $\Syz(x^a, y^a, z^a)$ to the curve given by $ z^\delta  = x^\delta + y^\delta + t_0 x^{\frac{\delta}{2}}y^{\frac{\delta}{2}} $ for special parameters $t_0 \in \overline{k}$. Also note that in order to obtain a smooth family we have to restrict ourselves to $t_0 \neq -\frac{\delta}{2}, \frac{\delta}{2}$.

The following lemma collects several criteria in terms of the determinants $D^{(q)}$ and $E^{(q)}$ that occured in the proof of Theorem \ref{PGenericStronglySst}.

\begin{Le}
\label{LPseudoAlgorithms}
Let $k$ be a field of characteristic $p >0 $ and denote by $\mathcal{S}$ the syzygy bundle $\Syz(x^a, y^a, z^a)$ on \[Y = \Proj k[t][x,y,z]/(x^\delta + y^\delta + t x^{\frac{\delta}{2}}y^{\frac{\delta}{2}} - z^\delta),\] where $\delta \nmid a$, $\delta$ even and $p \nmid \delta$. 
Write $aq = \delta l  + r$ with $0 < r < \delta $, where $q = p^e$, and denote by $D^{(q)}$ the determinant associated to \[\mathcal{F}_l = \Syz(x^{aq}, y^{aq}, (x^\delta + y^\delta + t x^{\frac{\delta}{2}} y^{\frac{\delta}{2}})^l)\] and by $E^{(q)}$ the determinant associated to \[\mathcal{F}_{l+1} = \Syz(x^{aq}, y^{aq}, (x^\delta + y^\delta + t x^{\frac{\delta}{2}} y^{\frac{\delta}{2}})^{l+1})\] (as in the proof of Theorem \ref{PGenericStronglySst}). Let $t_0 \in \overline {k}$ be such that $Y_{t_0}$ is a smooth fibre of $Y \to \Spec k[t]$. Then the following hold:  
\begin{enumerate}[(a)]
\item{If $D^{(q)}(t_0) \neq 0$ and $E^{(q)}(t_0) \neq 0$ for all $q = p^e$ then $\mathcal{S}\vert_{Y_{t_0}}$ is strongly semistable. If $D^{(q')}(t_0) \neq 0$ and $E^{(q')}(t_0) \neq 0$ for some $q' \geq \frac{1}{2} \delta (\delta -1) + 1$ then $\mathcal{S}\vert_{Y_{t_0}}$ is semistable.}
\item{If $D^{(q)}(t_0) = 0$ for some $q = p^e$ with $r =1$ then $F^{e^\ast}\mathcal{S}\vert_{Y_{t_0}}$ is not semistable. In particular, $\mathcal{S}\vert_{Y_{t_0}}$ is not strongly semistable.}
\item{If $\mathcal{F}_l$ has a nonzero global section of total degree $\lceil \frac{3aq}{2} \rceil -r-1 $ or if $\mathcal{F}_{l+1}$ has a nonzero global section of total degree $ \lceil \frac{3aq}{2} \rceil -1 $ for some $q = p^e$ then $F^{e^\ast} \mathcal{S}\vert_{Y_{t_0}}$ is not semistable.}
\end{enumerate}
\end{Le}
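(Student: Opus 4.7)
The plan is to read all three parts as specialisations of the analysis from the proof of Theorem~\ref{PGenericStronglySst} combined with the criteria of Section~\ref{criteria}. By \cite[Lemma~1]{brennermiyaoka}, every non-trivial global section of $F^{e^\ast}\mathcal{S}\vert_{Y_{t_0}} = \Syz(x^{aq},y^{aq},z^{aq})\vert_{Y_{t_0}}$ in its critical degree $\lceil 3aq/2\rceil - 1$ is a sum of sections coming from $\mathcal{F}_l\vert_{t_0}$ (via $(s_1,s_2,s_3)\mapsto (z^r s_1, z^r s_2, s_3)$, which raises the total degree by $r$) and from $\mathcal{F}_{l+1}\vert_{t_0}$ (via $(t_1,t_2,t_3)\mapsto (t_1,t_2,z^{\delta-r}t_3)$, which preserves the total degree). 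By construction in the proof of Theorem~\ref{PGenericStronglySst}, $D^{(q)}$ and $E^{(q)}$ are the determinants of the square linear systems governing the existence of non-trivial global sections of $\mathcal{F}_l$ and $\mathcal{F}_{l+1}$ in their respective critical degrees, so $D^{(q)}(t_0)\neq 0$ (respectively $E^{(q)}(t_0)\neq 0$) is equivalent to the absence of such sections on the fibre over $t_0$.

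For part~(a) I would specialise this dictionary to $t=t_0$. If $D^{(q)}(t_0)\neq 0$ and $E^{(q)}(t_0)\neq 0$ for every $q=p^e$, then $F^{e^\ast}\mathcal{S}\vert_{Y_{t_0}}$ has no non-trivial global section in its critical degree for any $e$; since $\mathcal{O}_Y(1)$ admits the non-trivial section $x$, the refined form of Lemma~\ref{LGlobalSectionsStrongSst} applies and yields strong semistability. The semistability clause follows from Lemma~\ref{KGlobalSectionsSst} in the same way; the bound $q'\geq \tfrac{1}{2}\delta(\delta-1)+1$ is exactly the required $q'\geq g(Y_{t_0}) + \deg\mathcal{O}_Y(1)$, since a smooth plane curve of degree $\delta$ has genus $\tfrac{(\delta-1)(\delta-2)}{2}$ and $\deg\mathcal{O}_Y(1)=\delta$, and $\tfrac{(\delta-1)(\delta-2)}{2}+\delta = \tfrac{\delta(\delta-1)}{2}+1$.

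For parts~(b) and~(c) the goal is to exhibit a non-trivial global section of $F^{e^\ast}\mathcal{S}\vert_{Y_{t_0}}$ of total degree at most $\lceil 3aq/2\rceil - 1$; any such section gives an injection $\mathcal{O}_{Y_{t_0}}(-m)\hookrightarrow F^{e^\ast}\mathcal{S}\vert_{Y_{t_0}}$ with $m < 3aq/2 = -\mu(F^{e^\ast}\mathcal{S})/\deg\mathcal{O}_Y(1)$, contradicting semistability of $F^{e^\ast}\mathcal{S}\vert_{Y_{t_0}}$. Part~(c) is immediate from the shift rules: a section of $\mathcal{F}_l\vert_{t_0}$ of degree $\lceil 3aq/2\rceil - r - 1$ lifts to one of $F^{e^\ast}\mathcal{S}\vert_{Y_{t_0}}$ in degree $\lceil 3aq/2\rceil - 1$, while for $\mathcal{F}_{l+1}\vert_{t_0}$ the degree is preserved. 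For part~(b) I would carry out the arithmetic check that when $r=1$ the critical degree of $\mathcal{F}_l$ shifted by $r$ lands exactly on the critical degree of $F^{e^\ast}\mathcal{S}\vert_{Y_{t_0}}$: since $\delta$ is even and $aq = \delta l+1$ is odd, the critical degree $\lceil (3aq-1)/2\rceil - 1$ of $\mathcal{F}_l$ equals $\tfrac{3\delta l}{2}$, and adding $r=1$ gives $\tfrac{3\delta l}{2}+1 = \tfrac{3aq-1}{2} = \lceil 3aq/2\rceil - 1$. Then $D^{(q)}(t_0)=0$ yields a non-zero vector in the kernel of the specialised matrix, i.e.\ a non-trivial section of $\mathcal{F}_l\vert_{t_0}$ at its critical degree, which shifts to the required destabilising section.

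The only non-routine step is the ceiling-function arithmetic in part~(b) that singles out $r=1$: for $r > 1$ the shifted degree would land strictly above the critical degree of $F^{e^\ast}\mathcal{S}\vert_{Y_{t_0}}$ (by $\lceil r/2\rceil$ when $r$ is even, or $(r-1)/2$ when $r$ is odd), and vanishing of $D^{(q)}$ on its own would no longer produce a destabilising section.
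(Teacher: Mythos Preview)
Your proof is correct and follows the same approach as the paper: part~(a) via Lemma~\ref{LGlobalSectionsStrongSst}/\ref{KGlobalSectionsSst} after specialising the determinant argument of Theorem~\ref{PGenericStronglySst}, and parts~(b),~(c) via the degree-shift rules $m\mapsto m+r$ and $m\mapsto m$ for sections of $\mathcal{F}_l$ and $\mathcal{F}_{l+1}$. You actually supply more than the paper does---the explicit genus/degree computation for the bound in~(a), the ceiling arithmetic verifying that $r=1$ makes the shifted critical degree of $\mathcal{F}_l$ match that of $F^{e^\ast}\mathcal{S}$, and the closing remark explaining why larger $r$ fails---whereas the paper's proof simply asserts these steps.
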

\begin{proof}
Part (a) follows via the arguments employed in Theorem \ref{PGenericStronglySst}. The second statement follows from Lemma \ref{KGlobalSectionsSst}. As to part (b), we may assume that $\delta \geq 4$. 
Since $r =1$, sections of critical degree for $\mathcal{F}_{l}\vert_{Y_{t_0}}$ yield sections of critical degree for $F^{e^\ast}\mathcal{S}\vert_{Y_{t_0}}$.

Part (c) is clear since a section of $\mathcal{F}_l$ of total degree $m$ yields a section of total degree $m + r$ in $F^{e^\ast} \mathcal{S} \vert_{Y_{t_0}}$ and a section of $\mathcal{F}_{l+1}$ yields a section of the same total degree. 
\end{proof}

We do not have a general result which guarantees the existence of special elements $t_0 \in \overline{k}$ with the several semistability behaviours described in Lemma \ref{LPseudoAlgorithms}. However, this lemma provides the basis for various computational methods to find such special elements. These methods were implemented in \cocoa\ (see \cite{CocoaSystem}) and Macaulay2 (see \cite{M2}). The computations were made only for $\delta = 4$ and $a=1$, so we restrict to this case.

\medskip \noindent
$\bullet$
Part (b) of Lemma \ref{LPseudoAlgorithms} yields in theory a way to produce examples of bundles with the properties described in the introduction if $p \equiv 1$ mod $\delta$. Fix a prime characteristic $p$ and let $D^{(q)}$ be the determinant of the matrix corresponding to ${\mathcal F}_l$ and $E^{(q)}$ the determinant of the matrix corresponding to $\mathcal{F}_{l+1}$ for $q=p^{e}$ ($a=1$). If $(D^{(q)} \cdot E^{(q)})(t) \neq 0$ for some $q \geq (\delta -1)^2$, then ${\mathcal S}_t$ is semistable. If $D^{(q)} (t)=0$ for some $q=p^e$ (think of a larger $q$), then ${F^e}^\ast \mathcal{S}_t $ is not semistable, hence ${\mathcal S}_t$ is not strongly semistable. So for given $p$ we ``only'' have to look for $t \in \overline{k}$ and $q \geq (\delta -1)^2 $ such that $(D^{(q)} \cdot E^{(q)})(t) \neq 0$ but $D^{(qp)}(t)=0$. As $q$ grows the matrices get larger and so it is natural to expect that the number of zeros grows as well.

We implemented this as follows. Assume that $H^{(q)}(t) := (D^{(q)} \cdot E^{(q)})(t)$ and $D_{0}(t) := D^{(qp)}(t)$ are as above. One divides $D_{i}$ by $\gcd(H^{(q)}, D_{i})$. Call the resulting polynomial $D_{i+1}$, increment $i$ by one and repeat this process until $\gcd(H^{(q)}, D_{i}) = 1$. If the resulting polynomial has positive degree then we find $t_0$ as required. Unfortunately, computing determinants is very expensive. Hence, we were only able to use it for $p = 5, 13$ to get elements $t_0 \in k$ such that the syzygy bundle is semistable but not strongly semistable.

\medskip \noindent
$ \bullet$
Lemmata \ref{HNFMethod} and \ref{LPseudoAlgorithms} (c) provide the following method. Fix a prime number $p$. For all elements $t_0 \in \mathbb{F}_p$ we compute syzygies of $\mathcal{F}_l$, $\mathcal{F}_{l+1}$ (for $q=p$) of minimal degree in $\mathbb{F}_p[x,y]$ (note that it simplifies computational matters substantially to work in two variables)\footnote{We restricted ourselves to prime fields since \cocoa\ does not yet support general finite fields.}. These yield 
syzygies of $\Syz(x^p,y^p,z^p)$ on the curve given by $z^\delta=x^\delta+y^\delta +t_0 x^{\frac{\delta}{2}}y^{\frac{\delta}{2}}$. 

Suppose that such a syzygy $s = (s_1, s_2, s_3)$ of $F^\ast \mathcal{S} = \Syz(x^p,y^p,z^p)$ shows that this bundle is not semistable, which is just a degree condition. Then we check the conditions of Lemma \ref{HNFMethod}. To verify that $s$ has no zeros we have to check that $s_1, s_2, s_3$ generate an $R_+$-primary ideal. This in turn is equivalent to checking that $k[x,y,z]/(x^\delta + y^\delta + t_0 x^{\frac{\delta}{2}}y^{\frac{\delta}{2}} - z^4))/(s_1, s_2, s_3)$ is zerodimensional. Finally, we check that the degree fulfills the divisibility condition. If for some $t_0 \in  \mathbb{F}_p$ all these three conditions are fulfilled, then the syzygy bundle is semistable and its first Frobenius pull-back is not semistable anymore.

With this method we have found examples for most prime numbers $p \leq \primeboundlarge $, but also with many exceptions. For $p \leq 100$ the exceptions are $7, 23, 31, 47, 89$. For $p=7$ we also worked with higher Frobenius pull-backs but this was not successful.

\medskip \noindent
$ \bullet$
For the prime numbers $p =7, 23, 31, 47, 89$ (the only cases $\leq 100$ where the second method failed) we worked with ${\mathbb F}_{p^2}$ instead. This is computationally more expensive as we need a third variable. The second method directly applied to $\mathbb{F}_{p^2}$ is also expensive as we have to run through $p^2$ elements. Hence, we rather looked at the zeros of the determinants and found elements $t_0 \in {\mathbb F}_{p^2}$ with the looked-for behaviour.

\begin{Bsp}
Let $p =3$. Then the determinants associated to \[\mathcal{F}_0 = \Syz(x^3, y^3, (x^4 + y^4 - t x^2 y^2)^0)\] and to \[\mathcal{F}_1 = \Syz(x^3, y^3, (x^4 + y^4 - t x^2 y^2)^{1})\] are $D = 1$ and $E = t$ respectively. So only $E$ has a zero over $\mathbb{F}_3$ and the corresponding curve is the Fermat quartic $x^4 + y^4 - z^4$ over $\mathbb{F}_3$. Then $\mathcal{F}_{1}\vert_{t_0} = \Syz(x^3, y^3, (x^4 + y^4))$ is generated by $(x,y,-1), (y^3, -x^3, 0)$. Hence, we obtain the syzygies $s = (x,y, -z) $ and $t =(y^3, -x^3, 0)$ for $\Syz(x^3, y^3, z^3)$. Both sections do not have a zero since the components generate a primary ideal on the Fermat quartic. The syzygy $s$ is of total degree $4$ and $t$ is a (Koszul-)syzygy of total degree $6$. In particular, the total degree of $t$ fails to satisfy both of the numerical conditions of Lemma \ref{HNFMethod} while the total degree of $s$ satisfies these conditions. Hence, we obtain via Lemma \ref{HNFMethod} that $\Syz(x,y,z)$ is semistable on the Fermat quartic over $\mathbb{F}_3$ while its first Frobenius pull-back is not semistable.
\end{Bsp}

\begin{Bsp}
For $p = 7$ we have $7=4 \cdot 1 +3$ and so we obtain the matrices
\[
\begin{pmatrix} t & 0 & 1 & 0 & 0 \\ 0 & t & 0 & 1 & 0 \\ 1 & 0 & t & 0 & 1 \\ 0 & 1 & 0 & t & 0 \\ 0 & 0 & 1 & 0 & t
\end{pmatrix}
\]
with determinant $D=t^5 +4t^3 + 2t$ (corresponding to $\mathcal{F}_1$) which has the zeros $0, 1,3,4,6$ over $\mathbb{F}_7$
and
\[
\begin{pmatrix} t^2 +2  & 0 & 2t  \\ 0 & t^2 +2 & 0  \\ 2t & 0 & t^2 +2 
\end{pmatrix},
\]
with determinant $E= t^6 + 2t^4 +4 t^2 + 1$ which factors as $(t^2 + 2)(t^2 +5t + 2)(t^2 + 2t + 2)$. In particular, $E$ has no zeros over the prime field.
However, all syzygies for $\Syz(x^7, y^7, z^7)$ that we obtained via $\mathcal{F}_1$ fail to be of total degree $< - \frac{p \deg \mathcal{S}}{2 \deg \mathcal{O}_Y(1)} = \frac{21}{2}$. In order to settle the case $p = 7$ we then looked at the fibre $t_0$ in $\mathbb{F}_{49}$ satisfying $t_0^2 + 2 = 0$ and directly computed generators of $\Syz(x^7, y^7, z^7)$ on the curve $z^4 = x^4 + y^4 + t_0 x^2 y^2$. In particular, this yielded the syzygy $(-2y^3t_0 - x^2y, -2x^3t_0 - xy^2, xyz)$ which is primary and satisfies all the numerical conditions of Lemma \ref{HNFMethod}.
\end{Bsp}

\begin{Bem}
\label{ETS}
In order to prove that for all (or at least for infinitely many) prime numbers there exist elements $t_0 \in \overline{{\mathbb F}}_p$ such that $\Syz(x,y,z)$ is semistable but not strongly semistable on the corresponding curve one needs probably a better understanding of the structure of the matrices occuring in the proof of Theorem \ref{PGenericStronglySst} and their determinants. By the structure theorem on finitely generated modules over principal ideal domains, these matrices are similar to diagonal matrices \[\begin{pmatrix} Q_1 &0 & \ldots & 0 \\ 0 &Q_2 & \ldots & 0 \\ \vdots & \ldots  & \ddots &  \vdots  \\ 0 &  \ldots & 0 & Q_s  \end{pmatrix} \, ,\]
with polynomial entries $Q_i$ where $Q_i$ divides $Q_{i+1}$. For $t _0 \in \overline{{\mathbb F}}_p$, the number of polynomials occuring on the diagonal which vanish at $t_0$ is directly related to the minimal degree $k$ where ${\mathcal F}_l(k)$ (or ${\mathcal F}_{l+1}(k)$) has a nonzero section and hence via Lemma \ref{LPseudoAlgorithms} (c) to the destabilizing behaviour of $F^{e^\ast} \Syz$. We do neither know an explicit description of the polynomials occuring on the diagonal nor what it means when $t_0$ is a zero of higher order of some of these polynomials. 
\end{Bem}

Combining the generic result of Section \ref{varycurvegeneric} and the computations we obtain the following class of examples.

\begin{Prop}
\label{Beispiel1}
Let $k$ be a field of positive characteristic $p$ and consider the syzygy bundle $\mathcal{S} = \Syz(x,y,z)$ on the curve \[Y = \Proj k[t][x,y,z]/(x^4 + y^4 + t x^2 y^2 - z^4) \rightarrow \Spec k[t].\]
Then $\mathcal{S}$ is strongly semistable on the generic fibre $Y_{k(t)}$. For all primes $p \leq \primeboundlarge $, $p \neq$ \primeexceptions , there is a closed point $t_0  \in \Spec k[t] $ with smooth fibre $Y_{t_0}$ such that the special bundle $\mathcal{S}_{t_0}$ on $Y_{t_0}$ is semistable but not strongly semistable. 
\end{Prop}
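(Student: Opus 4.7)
The generic-fibre claim follows by applying Theorem \ref{PGenericStronglySst} with $\delta = 4$ and $a = 1$: the hypotheses $\delta \nmid a$ and $\gcd(p, \delta) = 1$ hold for every odd prime $p$, so $\Syz(x,y,z)$ is strongly semistable on $Y_{k(t)}$, and hence so is its pullback to the generic fibre.

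The real content is producing, for each prime $p \leq \primeboundlarge$ outside the exceptional list, a closed point $t_0$ with smooth $Y_{t_0}$ at which $\mathcal{S}_{t_0}$ is semistable but not strongly semistable. The plan is to invoke, case by case, one of the three computer-algebraic procedures set out after Lemma \ref{LPseudoAlgorithms}. The principal tool is the second procedure: for each $t_0 \in \mathbb{F}_p$ compute, in the two-variable ring $\mathbb{F}_p[x,y]$, minimal-total-degree global sections of ${\mathcal F}_l$ and ${\mathcal F}_{l+1}$ (for $q=p$), lift them to syzygies $(s_1, s_2, s_3)$ of $\Syz(x^p, y^p, z^p)$ on $Y_{t_0}$ via the footnote rule in the proof of Theorem \ref{PGenericStronglySst}, and check for at least one such syzygy both the degree inequality and divisibility condition of Lemma \ref{HNFMethod} together with the no-zero condition, the latter by testing zero-dimensionality of $k[x,y,z]/(x^4+y^4+t_0 x^2 y^2 - z^4,\, s_1, s_2, s_3)$. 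Whenever the search succeeds, Lemma \ref{HNFMethod} delivers semistability of $\mathcal{S}_{t_0}$ together with non-semistability of $F^{\ast}\mathcal{S}_{t_0}$.

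For primes where no $t_0 \in \mathbb{F}_p$ clears this test, the plan is to pass to $\mathbb{F}_{p^2}$, locate candidates for $t_0$ among the zeros of $D^{(p)}$ or $E^{(p)}$, and rerun the lifting and no-zero check there; this handles the smallest hard cases $p \in \{7, 23, 31, 47, 89\}$. For $p = 5$ and $p = 13$, which satisfy $p \equiv 1 \pmod 4$, we instead apply Method~1 by iteratively reducing $D^{(pq)}$ modulo $\gcd(D^{(pq)}, D^{(q)} E^{(q)})$ to obtain a polynomial with a root $t_0$ at which $D^{(q)}(t_0) E^{(q)}(t_0) \neq 0$ but $D^{(pq)}(t_0) = 0$, so that parts (a) and (b) of Lemma \ref{LPseudoAlgorithms} jointly yield the conclusion.

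The main obstacle is purely computational: the matrices whose determinants are $D^{(q)}$ and $E^{(q)}$ grow linearly with $q = p^e$, and both the determinant computations over $k[t]$ and the primary-ideal tests on the quartic become very expensive as $p$ increases. This is what forces both the upper bound $\primeboundlarge$ and the list of \primeexceptionsnumber\ exceptional primes (beginning at $103$), for which none of the three variants terminated successfully within available resources.
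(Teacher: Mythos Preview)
Your proposal is correct and follows the paper's approach: the generic claim is immediate from Theorem \ref{PGenericStronglySst} with $\delta=4$, $a=1$, and the special-fibre claim is obtained by the computer-algebra search based on Lemmata \ref{HNFMethod} and \ref{LPseudoAlgorithms}. One small point: the paper's proof invokes only the method of Lemmata \ref{HNFMethod} and \ref{LPseudoAlgorithms}(c) (the second procedure and its ${\mathbb F}_{p^2}$ variant), so your detour through Method~1 for $p=5,13$ is unnecessary---those primes are not among the Method-2 exceptions $\{7,23,31,47,89\}$ and are already handled by the main search over ${\mathbb F}_p$.
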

\begin{proof}
The first claim is immediate from Theorem \ref{PGenericStronglySst}. To check semistability we applied the method based on Lemmata \ref{HNFMethod} and \ref{LPseudoAlgorithms} (c) to $\mathcal{S}$.
\end{proof}

\begin{Bem}
\label{Bblubb}
\begin{enumerate}[(a)]
\item{The bundles in Proposition \ref{Beispiel1} do not have degree $0$, but this can be remedied easily. Consider the finite covering
\[ \varphi: X = \Proj k[t][u,v,w]/(u^{8} + v^{8} + t u^{4} v^{4} - w^{8}) \longrightarrow Y \]
induced by $x \mapsto u^2, y \mapsto v^2$ and $w \mapsto z^2$ and note that $\varphi^\ast \mathcal{O}_Y(1) = \mathcal{O}_X(2)$. Hence, we obtain that $\deg \varphi = 4$. Consequently,
\[ \varphi^\ast \mathcal{S} \otimes \mathcal{O}_X(3) = \Syz(u^2, v^2, w^2)(3) \]
is of degree zero.

Moreover, $\mathcal{S}$ is still generically strongly semistable. Since $\varphi$ is separable, $\varphi^\ast \mathcal{S}$ is still semistable but not strongly semistable for some special fibre for any $p$ as in Proposition \ref{Beispiel1}. Thus passing to a finite separable covering we find examples where $\mathcal{S}$ is of degree zero.}
\item{Passing to a quartic field extension of $k(t)$ and then applying a suitable automorphism of $\mathbb{P}^1$ we may rewrite our curve equation as $z^4 - xy(x+y)((\frac{2a}{a+b} -1)x + \frac{a+b}{a-b}y)$, where $\pm a, \pm b$ denote the zeros of $x^4 + tx^2 + 1$. Write $A = (\frac{2a}{a+b} -1)$ and $B = \frac{a+b}{a-b}$. Again passing to a field extension, transforming $z \mapsto A^{-\frac{1}{4}} z$ and replacing $t$ by $s = \frac{B}{A}$ as a variable we have an isomorphism with $z^4 - xy(x+y)(x + sy)$}. 
\end{enumerate} 
\end{Bem}

\begin{Bem}
It is worth mentioning that similar examples already occured in characteristic $2$ and $3$ in Hilbert-Kunz theory. More precisely, Paul Monsky showed in \cite[Theorem III]{monskyzdp4} that the Hilbert-Kunz multiplicity of $(x,y,z)$ in the fibre ring
\[ \mathbb{F}_3[t][x,y,z]/(z^4 - xy(x+y)(x+ ty) )\] is $3 + \frac{1}{9^d}$ or $3$ depending on whether $t$ is algebraic or not, where $d = (\mathbb{F}_3(t):\mathbb{F}_3)$. 

Via the interpretation of Hilbert-Kunz multiplicities in terms of vector bundles over curves one obtains that $\mathcal{S} = \Syz(x,y,z)$ is strongly semistable if and only if its Hilbert-Kunz multiplicity is $3$ (see \cite[Corollary 4.6]{brennerhilbertkunz}). Since semistability is an open property one thus obtains special fibres where $\mathcal{S}$ is semistable but not strongly semistable. Also note that the first example of Monsky in characteristic $3$ is generically isomorphic to our example after passing to a finite field extension -- this follows from the discussion in Remark \ref{Bblubb} (b).

In characteristic $2$, the results of \cite{monskypoints4quartics} say that the Hilbert-Kunz multiplicity of
\[{\mathbb F}_2 [t][x,y,z]/(z^4 + xyz^2 + x^3z + y^3z + t x^2y^2)\]
depends on whether $t$ is algebraic or not. This in turn implies that the syzygy bundle $\Syz(x,y,z)$ on
\[ \Proj \mathbb{F}_2[t][x,y,z]/(z^4 + xyz^2 + x^3z + y^3z + t x^2y^2) \longrightarrow \Spec {\mathbb F}_2[t] \]
is generically strongly semistable and semistable but not strongly semistable for certain (in fact all smooth) special fibres.
\end{Bem}

\begin{Bem}
Since Proposition \ref{Beispiel1} is not applicable in characteristic $2$ we provide a separate example for this case. Consider $\mathcal{S} = \Syz(x^2, y^2,z^2)(3)$ on the smooth relative curve $Y = \Proj k[t][x,y,z]/(x^5 + y^5 - z^5 + tx^2y^3)$, where $k$ is a field of characteristic $2$. We denote the generic fibre of $\mathcal{S}$ by $\mathcal{S}_t$ and the special fibre at $0$ by $\mathcal{S}_0$.

First, we claim that $\mathcal{S}_0$ is semistable but not strongly semistable. 
The first Frobenius pullback of $\Syz(x^2,y^2, z^2)$ on the fibre $t = 0$ admits the syzygy $s = (x,y,z)$ in total degree $5$. Hence, $\mathcal{S}_0$ is semistable but not strongly semistable by Lemma \ref{HNFMethod}. 

Next, we claim that $\mathcal{S}$ is strongly semistable on the generic fibre. In fact, $F^{3^\ast}(\Syz(x^2,y^2,z^2)_t)$ has the (generically) linearly independent global sections
\begin{align*}
s_1 &= (x^4 y^4, x^8 t^4 + y^8, y^4 z^4) \text{ and }\\
s_2 &= (x^2 y^5 z t^5 + y^7z t^4 + x^2 y^5 z + x^2 z^6, x^6yzt^9 + x^4y^3zt^8 + x^2y^5zt^7 + x^5y^2zt^6 +\\& y^7zt^6 + x^6yzt^4 + xy^6zt^4 + x^2y^5zt^2 + x^5y^2zt + y^2z^6t + x^3y^4z,\\
&y^8t^6 + x^3y^5t^5 + xy^7t^4 + x^2y^6t^2 + y^8t + x^8 + x^3y^5)
\end{align*}
of total degree $24$. Observe furthermore that $s_1$ has no zeros. Hence, $F^{3^\ast} (\mathcal{S}_t)$ is trivial and in particular strongly semistable. It follows that $\mathcal{S}_t$ is strongly semistable as well.
\end{Bem}

From Proposition \ref{Beispiel1} we also immediately obtain

\begin{Ko}
Consider the twisted syzygy bundle $\mathcal{S} = \Syz(u^2, v^2, w^2)(3)$ on the relative curve \[\Proj V[u,v,w]/(u^8 + v^{8} + t u^{4} v^{4} - w^{8}) \longrightarrow \Spec V\] over the discrete valuation ring $V = K[t]_{(t - t_0)}$, where $K$ is a field of positive characteristic $p \leq \primeboundlarge$, with the exceptions mentioned in Proposition \ref{Beispiel1}, such that $\mathbb{F}_{p^2} \subseteq K$ and $t_0$ is a suitable element of $K$.
Then $\mathcal{S}$ is of degree zero and generically strongly semistable and it is semistable but not strongly semistable in the special fibre.
\end{Ko}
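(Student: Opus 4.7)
The plan is to derive this from Proposition~\ref{Beispiel1} via the finite cover of Remark~\ref{Bblubb}(a) and then localize at $(t-t_0)$.

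First I would apply Proposition~\ref{Beispiel1} over $K$: the hypothesis $\mathbb{F}_{p^2} \subseteq K$ is precisely what is needed to guarantee that the parameter produced by the computational search (which for a few primes was only found after base change to $\mathbb{F}_{p^2}$) actually lies in $K$. This yields a $t_0 \in K$ with smooth fibre $Y_{t_0}$ such that $\mathcal{T} = \Syz(x,y,z)$ on $Y = \Proj K[t][x,y,z]/(x^4+y^4+tx^2y^2-z^4)$ is strongly semistable generically and semistable but not strongly semistable on $Y_{t_0}$.

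Next, I would invoke the finite morphism $\varphi: X \to Y$ of Remark~\ref{Bblubb}(a), given by $(x,y,z) \mapsto (u^2,v^2,w^2)$; since $p \neq 2$ (the prime $2$ is among the excluded ones), $\varphi$ is separable. A straightforward Jacobian check shows that both families $X \to \Spec K[t]$ and $Y \to \Spec K[t]$ have the same singular locus $\{t = \pm 2\}$, so $X_{t_0}$ is smooth whenever $Y_{t_0}$ is. Functoriality of syzygy bundles identifies $\varphi^*\mathcal{T} = \Syz(u^2,v^2,w^2)$, and the degree zero claim for $\mathcal{S} = (\varphi^*\mathcal{T})(3)$ is the content of Remark~\ref{Bblubb}(a).

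The central input is then the standard fact that for a finite separable morphism of smooth projective curves, a vector bundle is strongly semistable (respectively semistable) if and only if its pullback is. Applying this on the generic fibre of $\varphi$ and on the restriction $X_{t_0} \to Y_{t_0}$ transports strong semistability from $\mathcal{T}_\eta$ to $\mathcal{S}_\eta$, and likewise transports semistability and the failure of strong semistability from $\mathcal{T}|_{Y_{t_0}}$ to $\mathcal{S}|_{X_{t_0}}$; the line bundle twist by $\mathcal{O}_X(3)$ is inert for both notions. Localizing $K[t]$ at $(t-t_0)$ then yields the DVR $V$ and the statement as formulated. The main point to check carefully is the preservation of semistability under separable pullback, but this is classical: for strong semistability one pushes a Frobenius destabiliser forward along $\varphi_*$, and for mere semistability separability is what makes the Harder-Narasimhan filtration behave well under pullback.
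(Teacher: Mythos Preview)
Your proposal is correct and follows essentially the same route as the paper: invoke Proposition~\ref{Beispiel1} to obtain $t_0$, pull back along the separable cover of Remark~\ref{Bblubb}(a) to get the degree-zero bundle on the degree-$8$ curve, and then localize at $(t-t_0)$. You supply more detail than the paper's three-line proof (the role of $\mathbb{F}_{p^2}\subseteq K$, the smoothness check, and the preservation of (strong) semistability under separable pullback), but the architecture is identical.
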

\begin{proof}
Fix a fibre as in Proposition \ref{Beispiel1} and pull back to the curve given in Remark \ref{Bblubb} (a). Replacing $k$ by a suitable quadratic extension if necessary we may assume that this fibre corresponds to a rational point so that its ideal is of the form $(t- t_0)$. Localising thus yields the result.
\end{proof}

\bibliography{bibliothek.bib}
\bibliographystyle{amsplain}
\end{document}